
\makeatletter

\documentclass[12pt]%
{article}

\newsavebox{\savepar}

\usepackage{amsmath}
\usepackage{amsfonts}
\usepackage{euscript}
\usepackage{oldgerm}
\usepackage{amsthm}
\usepackage{rotating}
\usepackage{mathrsfs}
\usepackage{hyperref}
\usepackage{amssymb}
\usepackage{epsfig}
\usepackage{dsfont}
\usepackage{subfig}
   \oddsidemargin .2in

   \parindent .3in
   \pagestyle{myheadings}
   \topmargin .3in
   \topskip .2in
   \evensidemargin .2in    
   \textwidth 6.2in
   \textheight 8.2in

\makeindex

  \newtheorem{theorem}{Theorem}[section]

  \theoremstyle{definition}
\newtheorem{definition}[theorem]{Definition}

  \theoremstyle{remark}

\newtheorem{lemma}[theorem]{Lemma}
\newtheorem{proposition}[theorem]{\bf Proposition}
\newtheorem{Corollary}[theorem]{\bf Corollary}
\theoremstyle{definition}

\theoremstyle{remark}


\begin{document}

\newcommand{\norm}[1]{\left\lVert #1\right\rVert}
\newcommand{\namelistlabel}[1]{\mbox{#1}\hfil}
\newenvironment{namelist}[1]{%
\begin{list}{}
{
\let\makelabel\namelistlabel
\settowidth{\labelwidth}{#1}
\setlength{\leftmargin}{1.1\labelwidth}
}
}{%
\end{list}}

\newcommand{\K}{\mathcal K}
\newcommand{\inp}[2]{\langle {#1} ,\,{#2} \rangle}
\newcommand{\vspan}[1]{{{\rm\,span}\{ #1 \}}}
\newcommand{\R} {{\mathbb{R}}}

\newcommand{\B} {{\mathcal{B}}}
\newcommand{\C} {{\mathbb{C}}}
\newcommand{\N} {{\mathbb{N}}}
\newcommand{\Q} {{\mathbb{Q}}}
\newcommand{\LL} {{\mathbb{L}}}
\newcommand{\Z} {{\mathbb{Z}}}

\title{Zabreiko's Lemma with Bicomplex and hyperbolic scalars and its applications}
\author{Akshay S. RANE\footnote{Department of Mathematics, Institute of Chemical Technology, Nathalal Parekh Marg, Matunga, Mumbai 400 019, India, email :  as.rane@ictmumbai.edu.in}  ~ and ~ Mandar THATTE\footnote{Department of Mathematics, Institute of Chemical Technology, Nathalal Parekh Marg, Matunga, Mumbai 400 019, India, email :  007thattecharlie@gmail.com } 
\hspace {1mm}
}
\date{ }
\maketitle

\begin{abstract}
 In this paper, we shall consider  the notion of hyperbolic semi norm which on a module $X$ to set of all positive hyperbolic numbers. We shall prove  the  characterization of continuity of hyperbolic semi norm in this setup. We shall prove Zabreiko's lemma when $X$ is a F, $\mathbb{BC}$ module, where $\mathbb{BC}$ denotes the set of Bi complex numbers.(analogous to completeness). This lemma shall be used to prove the fundamental theorems of functional analysis like the Closed Graph Theorem, Open mapping Theorem, Uniform Boundedness principle.
\end{abstract}

\noindent
Key Words : Bicomplex modules, Hyperbolic numbers, Zabreiko's Lemma, Closed Graph Theorem, Open mapping Theorem, Uniform Boundedness principle.

\smallskip
\noindent
AMS  subject classification : 46B20, 46C05, 46C15,46B99,46C99
\newpage
\newpage


\setcounter{equation}{0}
\section{Introduction}

\noindent 
Bi complex numbers are being studied for a long time and have been introduced in \cite{ALSS}. There have been several efforts in generalizing the results of functional analysis in the case of modules over Bi complex numbers. In \cite{ALSS}, they he have  discussed basics of Bicomplex numbers, hyperbolic numbers.They also have introduced the notion of hyperbolic norms on modules. In \cite{CSS}, bi omplex holomorphic calculus is developed whereas details about Banach Algebras over bi complex numbers can be found in \cite{EPR}.In \cite{HAR} \cite{KKR}, they have proved the fundamental theorems of functional analysis for bi complex and hyperbolic numbers. It is well known from the classic paper of \cite{Zab} that the Zabreiko's Lemma enables us to prove the major theorems of functional analysis. In this paper, we shall prove the Zabreiko's lemma for Bi Complex numbers  and as an application, we shall prove the fundamental theorems such as the closed graph theorem, Uniform bounded principle, Open mapping theorem in the context of Bi complex numbers.
$$\mathbb{BC} := \lbrace Z =z_1+ z_2 \; j | z_1, z_2 \in \mathbf C (i)  \rbrace $$
where $i,j$ are such that $ij=ji, i^2=j^2=-1.$
The set $\mathbb{BC} $ forms a Ring under the usual addition and multiplication. 
The product of imaginary units $i$ and $j$ defines a hyperbolic unit $k$ such that $k^2=1.$ The product of all units is commutative and satisfies $$ ij=k \;, ik=-j \; jk=-i.$$
The set $\mathbb{D}$ of hyperbolic numbers is defined as 
$$ \mathbb{D} = \lbrace \alpha= \beta_1 + k \beta_2 | \beta_1,\beta_2 \in \mathbb{R} \rbrace .$$ The set $D$ is a ring and a module over itself.
The hyperbolic numbers $e_1$ and $e_2$ are defined as
$$ e_1= \frac{1+k}{2},\; e_2=\frac{1-k}{2}$$ are linearly independent in the $\mathrm{C(i)}$ vector space  $ \mathbb{B}\mathbb{C}$ and satisfy the following properties 
$$e_1^2=e_1, e_2^2=e_2,e_1+e_2=0,e_1e_2=0.$$
Any bi complex number $Z=w_1+jw_2$ can be uniquely written as
$$ Z= e_1 z_1 +e_2z_2$$ where $z_1=w_1-iw_2$ and $z_2=w_1+iw_2$ are elements of $ \mathrm{C(i)}$. 
The hyperbolic-valued or $\mathbb{D}-$ valued norm $|Z|_k$ of a bicomplex number $Z= e_1z_1+e_2z_2$ is defined as 
$$ |Z|_k= e_1 |z_1|+ e_2|z_2|.$$
A hyperbolic number $$ \alpha=e_1 \alpha_1 +e_2 \alpha_2$$
where $ \alpha_1= \beta_1+ \beta_2$ and $ \alpha_2= \beta_1- \beta_2$ are real numbers is said that positive hyperbolic number if $ \alpha_1\geq 0 $ and $ \alpha_2 \geq 0.$ $\alpha$ is strictly positive if $\alpha_1>0$ and $\alpha_2>0.$ 
Thus the set of positive hyperbolic numbers is given by
$$ \mathbb{D}^+ = \lbrace \alpha=e_1  \alpha_1 + e_2 \alpha_2 | \alpha_1, \alpha_2 \geq 0\rbrace $$
\noindent { \bf Remark }
Throughout this paper $\leq$ defines a partial order on $\mathbb{D}$ that is $ \alpha \leq \beta $ if $ \beta - \alpha \in \mathrm{D}^+.$ Also note that if  $\alpha \in \mathbb{D}^+$, then
$$ |\alpha|_k= |\alpha_1| e_1 + |\alpha_2|e_2 =\alpha_1 e_1 + \alpha_2 e_2 = \alpha.$$
Also note that $ \alpha_1 e_1 + \alpha_2 e_2 \in \mathbb{D}^+$ is strictly positive then $\alpha_1 $ and $ \alpha_2$ are strictly positive real numbers and $$ \frac{1}{\alpha_1 e_1 + \alpha_2 e_2  }= \frac{1}{\alpha_1} e_1+ \frac{1}{\alpha_2} e_2$$ is also in $ \mathbb{D}^+$ and strictly positive. 
$\mathrm{D}$ valued norm
Let $X$ be a bi complex module. A map $\|.\|_D: X \rightarrow \mathrm{D}^+ $ is said to be hyperbolic norm if it satisfies the following :\\
(a) $ \|x\|_\mathrm{D}=0$ iff $x=0$ \\
(b) $ \|\mu x\|_\mathrm{D}=|\mu|_K \|x\|_D$ for all $x\in X$ and for all $ \mu \in \mathbb{B}\mathbb{C} $\\
(c) $ \|x+y\|_D < \|x\|_D +\|y\|_D$ for all $ x,y \in X.$

\setcounter{equation}{0}
\section{Main Results}
\noindent We now define the notion of a hyperbolic semi norm. 
Let $X$ be a bi complex module. A map $p_D: X \rightarrow \mathbb{D}^+ $ is said to be hyperbolic semi-norm on $X$ if it satisfies the following :\\
(a) $ p_\mathbb{D}(\mu x)=|\mu|_K p_\mathbb D(x)$ for all $x\in X$ and for all $ \mu \in \mathbb{B}\mathbb{C} $\\
(c) $ p_\mathbb D(x+y) < p_\mathbb D(x) +p_\mathbb D(y) $ for all $x, y \in X.$

\noindent 
Let $X$ and $Y$ be bi complex modules. A map $T: X \rightarrow Y$ is said to be linear if for all $x,y \in X$ $T(x+y)=T(x)+T(y)$ and $T( \mu x) = \mu T(x) $ for $ \mu \in\mathbb{B}\mathbb{C} .$
Let $ \|.\|_{\mathbb D,X} $ and $ \|.\|_{\mathbb D,Y} $ be hyperbolic norms on $X$ and $Y$ respectively. The operator $T$ is said to be $\mathbb{D} $ bounded if there exists $ M \in \mathrm{\mathbb D^+}$ such that $\|T(x)\|_{\mathbb D,Y} \leq M \|x\|_{\mathbb D,X}$.
Define $$ p_\mathbb{D}(x) =\|Tx\|_{\mathbb D,Y}$$
$p$ defines a hyperbolic semi norm on a bi complex module $X.$ 
A hyperbolic semi norm is clearly finitely sub additive. We define $\displaystyle  \sum_{n=1}^ \infty x_n$.
Consider the partial sum $ \displaystyle  s_n= \sum_{k=1}^n x_k$. We say that the above series has a sum provided $s_n$ converges i.e. if there exists $x\in X$ such that $\|s_n -x\|_\mathbb D \rightarrow 0.$
$\displaystyle  \sum_{k=1}^ \infty x_k$ is said to be absolutely summable if $ \displaystyle \sum_{k=1}^ \infty \|x_k\|_\mathbb D $ is convergent.

Let $X$ be a $F-\mathbb{B}\mathbb{C}$ module, which means that every cauchy sequence with respect to the hyperbolic norm converges. Then every absolutely summable series is convergent.
Given that $ \displaystyle \sum_{k=1}^\infty \|x_k\|_\mathbb D$ converges.
Let $$ s_n= \sum_{k=1}^n x_k,\; s_m= \sum_{k=1}^m x_k,\;n>m$$
So $$s_n-s_m = \sum_{k=m+1}^n x_k$$
As a consequence
$$ \|s_n-s_m \|_\mathbb D \leq \|  \sum_{k=m+1}^n x_k\|_\mathbb D \leq \sum_{k=m+1}^n \| x_k \|_\mathbb D \rightarrow 0$$ as $ n,m \rightarrow \infty .$
This implies $s_n$ Cauchy and hence convergent.
For $ \delta \in \mathbb{D}^+ $ note that
$$ \delta = \alpha_1 e_1 + \alpha_2 e_2,$$ where
$\alpha_1, \alpha_2 \geq 0,$
so that 
$$|\delta|_k = |\alpha_1| e_1 + |\alpha_2|e_2 = \alpha_1 e_1 + \alpha_2 e_2 = \delta $$
\noindent 
We shall now prove the Baire Category theorem which shall be used in the main result. 
\begin{theorem} Let X be a F-$\mathbb{B}\mathbb{C}$ module. Let $U_{n}$ be open dense subsets of X for n $\in \mathbb{N}$. Then $\bigcap_{n}U_{n}$ is dense in X.
\end{theorem}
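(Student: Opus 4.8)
The plan is to transplant the classical nested-ball proof of the Baire Category Theorem into this setting, carrying out every estimate in the hyperbolic-valued norm $\|\cdot\|_{\mathbb D}$ and tracking both idempotent components $e_1,e_2$ throughout. First I would fix an arbitrary nonempty open set $W\subseteq X$ (equivalently, an open ball about an arbitrary point) and reduce the assertion that $\bigcap_n U_n$ is dense to showing $W\cap\bigcap_n U_n\neq\emptyset$. Since the whole argument is topological, it suffices to know what a ball is: for $x\in X$ and a strictly positive radius $r=r^{(1)}e_1+r^{(2)}e_2\in\mathbb{D}^+$ (both real components positive), the ball $B(x,r)=\{y:\|y-x\|_{\mathbb D}<r\}$ is a genuine neighborhood in the topology induced by $\|\cdot\|_{\mathbb D}$, where $<$ is the strict version of the partial order on $\mathbb{D}$.

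Next I would construct a nested sequence of closed balls by induction. Because $U_1$ is dense and open, $U_1\cap W$ is nonempty and open, so it contains a closed ball $\overline{B}(x_1,r_1)$ with both real components of $r_1$ smaller than $1/2$. Given $\overline{B}(x_n,r_n)$, density of $U_{n+1}$ makes $U_{n+1}\cap B(x_n,r_n)$ nonempty and open, so I can select a closed ball $\overline{B}(x_{n+1},r_{n+1})\subseteq U_{n+1}\cap B(x_n,r_n)$ with both real components of $r_{n+1}$ less than $2^{-(n+1)}$. Shrinking both components of the radius at each stage is exactly what keeps the balls honestly open (strictly positive radius in both slots) and guarantees nesting $\overline{B}(x_{n+1},r_{n+1})\subseteq \overline{B}(x_n,r_n)\subseteq U_n$.

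The decisive step is producing the common point. For $m>n$, nesting gives $x_m\in\overline{B}(x_n,r_n)$, hence $\|x_m-x_n\|_{\mathbb D}\leq r_n$; since both real components of $r_n$ are bounded by $2^{-n}$ and tend to $0$, the sequence $(x_n)$ is Cauchy with respect to $\|\cdot\|_{\mathbb D}$. Here I invoke the hypothesis that $X$ is an $F$-$\mathbb{BC}$ module, so $x_n\to x$ for some $x\in X$. As each $\overline{B}(x_n,r_n)$ is closed and $x_m$ lies in it for all $m\geq n$, the limit satisfies $x\in\overline{B}(x_n,r_n)\subseteq U_n$ for every $n$, and also $x\in W$; thus $x\in W\cap\bigcap_n U_n$, which establishes density.

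I expect the main obstacle to be the bookkeeping forced by the hyperbolic (rather than real) values of the norm: because $\leq$ on $\mathbb{D}$ is only a partial order, at each stage I must control the $e_1$- and $e_2$-components simultaneously and confirm that "$\|x_m-x_n\|_{\mathbb D}\to 0$" really means componentwise convergence to $0$ in the sense of the convergence defined earlier. Verifying that closed hyperbolic balls contain the limits of their convergent sequences and that the triangle inequality (finite subadditivity of $\|\cdot\|_{\mathbb D}$) propagates correctly under the decomposition $\alpha=\alpha_1e_1+\alpha_2e_2$ is precisely where the structure of $\mathbb{D}^+$ must be used with care.
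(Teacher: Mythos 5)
Your proposal is correct and follows essentially the same route as the paper: the classical nested closed-ball construction with radii shrinking like $2^{-n}$, the Cauchy sequence of centers, and the use of completeness of the $F$-$\mathbb{BC}$ module to obtain a limit point lying in every $\overline{B}(x_n,r_n)\subseteq U_n$ and in the initial ball. Your additional bookkeeping on the two idempotent components of the hyperbolic radius is a refinement the paper glosses over, but it does not change the argument.
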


\begin{proof}Let U=$\bigcap_{n}U_{n}.$ Let $x \in X$ and $r>0$. Since $U_{1}$ is open,dense and B(x,r) is open, there exists $x \in B(x,r)\cap U_{1}.$ Since $B(x,r) \cap U_{1}$ is open, choose $0<r<\frac{1}{2}$ such that $B[x_{1},r_{1}] \subset B(x,r) \cap U_{1}.$ Similarly $B(x_{1},r_{1})$ is open and $U_{2}$ is open dense , so,  there exists $x_{2} \in B(x_{1},r_{1}) \cap U_{2}$.Choose $0<r_{2}<(\frac{1}{2})^2$ such that $B[x_{2},r_{2}] \subset B(x_{1},r_{1}) \cap U_{2}.$\\ Proceeding in this way for $ n \in \mathbb{N}$, we get, $x_{n} \in X$ and $r_{n}$ such that $B[x_{n},r_{n}] \subset B(x_{n-1},r_{n-1})$ for $0<r_{n}< \frac{1}{2^{n}}.$\\
for $m \leq n$, $d_{D}(x_{m},x_{n}) \leq d_{D}(x_{n},x_{n-1})+...d_{D}(x_{m+1},x_{m})$ $\leq \sum_{k=m}^{n} (\frac{1}{2})^{k}.$ Since this series converges, $x_{n}$ is Cauchy. Now as X is F-$\mathbb{B}\mathbb{C}$ module, X is a complete metric space. As $x_{n} \in X$, there exists $x_{0} \in X$ such that $x_{n} \rightarrow x_{0}$. As $x_{0}$ is the limit of the sequence of $(x_{n})_{n \geq k}$ in the closed set $B[x_{k},r_{k}]$ we get, $x_{0} \in B[x_{k},r_{k}] \subset B(x_{k-1},r_{k-1}) \cap U_{k}$ for all $k \in \mathbb{N}$. Hence $x_{0} \in B(x,r) \cap U.$ Hence proved.
\end{proof} 
\section{Zabreiko's Lemma}
In this section we shall first prove the equivalent characterization of the continuity of the semi norm and show that in an $F$ space a countabely sub additive semi norm is continuous which is the Zabreiko's Lemma.
\begin{lemma}
 Let X be F-$\mathbb{B}\mathbb{C}$ module.let $p_{\mathbb D}$ be a hyperbolic semi norm on X. $p_{\mathbb D}$ is said to be continuous on X iff there exists $\alpha \in \mathbb D^{+}$ such that $p_{\mathbb D}(x) \leq \alpha \| x\|_{\mathbb D}$ for all $x \in X.$
\end{lemma}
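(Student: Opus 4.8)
The plan is to prove the two implications of the "iff" separately, treating the backward direction (boundedness $\Rightarrow$ continuity) as routine and concentrating the effort on the forward one. Throughout I would work in the idempotent decomposition, writing $\|x\|_{\mathbb{D}} = e_1\|x\|_1 + e_2\|x\|_2$ and $p_{\mathbb{D}}(x) = e_1 p_1(x) + e_2 p_2(x)$ with $\|x\|_1,\|x\|_2,p_1(x),p_2(x)\ge 0$ real, so that every $\mathbb{D}^+$-inequality $A\le B$ unpacks into two honest real inequalities between the $e_1$- and $e_2$-components. Two preliminary facts I would record first: $p_{\mathbb{D}}(0)=0$ (apply homogeneity with $\mu=0$, since $|0|_k=0$), and the reverse triangle inequality $|p_{\mathbb{D}}(u)-p_{\mathbb{D}}(v)|_k \le p_{\mathbb{D}}(u-v)$, which follows from finite subadditivity together with $p_{\mathbb{D}}(-w)=|-1|_k\,p_{\mathbb{D}}(w)=p_{\mathbb{D}}(w)$ and is nothing but the componentwise real reverse triangle inequality.

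For the backward direction, assume $p_{\mathbb{D}}(x)\le\alpha\|x\|_{\mathbb{D}}$. If $x_n\to x$, meaning $\|x_n-x\|_{\mathbb{D}}\to 0$ in both components, then the reverse triangle inequality gives $|p_{\mathbb{D}}(x_n)-p_{\mathbb{D}}(x)|_k \le p_{\mathbb{D}}(x_n-x)\le \alpha\|x_n-x\|_{\mathbb{D}}\to 0$, whence $p_{\mathbb{D}}(x_n)\to p_{\mathbb{D}}(x)$ and $p_{\mathbb{D}}$ is continuous. This half uses only the seminorm axioms, not completeness or Baire.

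For the forward direction I would mimic the classical "a continuous seminorm is bounded" argument, the genuinely new feature being that a single real scalar can no longer normalize both idempotent components at once. Continuity of $p_{\mathbb{D}}$ at $0$ (where $p_{\mathbb{D}}(0)=0$) yields a real $\delta>0$ with $\max(\|x\|_1,\|x\|_2)\le\delta \Rightarrow p_{\mathbb{D}}(x)\le e_1+e_2=1$, i.e. $p_1(x)\le 1$ and $p_2(x)\le 1$. For $x$ with both $\|x\|_1,\|x\|_2>0$ the decisive move is to rescale by the strictly positive hyperbolic scalar $\lambda=e_1\frac{\delta}{\|x\|_1}+e_2\frac{\delta}{\|x\|_2}$: then $\|\lambda x\|_{\mathbb{D}}=|\lambda|_k\|x\|_{\mathbb{D}}=e_1\delta+e_2\delta$, so $\lambda x$ sits in the controlled region and $p_{\mathbb{D}}(\lambda x)=|\lambda|_k\,p_{\mathbb{D}}(x)\le 1$; multiplying by the positive hyperbolic number $\frac{1}{\delta}\|x\|_{\mathbb{D}}$, which preserves $\le$ because componentwise it is multiplication by nonnegative reals, gives $p_{\mathbb{D}}(x)\le\frac{1}{\delta}\|x\|_{\mathbb{D}}$, so $\alpha=\frac{1}{\delta}$ serves for all such $x$.

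The step I expect to be the real obstacle is the zero-divisor case, say $\|x\|_1=0$ but $\|x\|_2>0$: here $\|x\|_{\mathbb{D}}$ is not invertible and the rescaling cannot touch the vanishing component. My resolution would avoid continuity entirely and lean on the norm axiom $\|\cdot\|_{\mathbb{D}}=0\iff 0$: from $\|x\|_1=0$ one gets $\|e_1 x\|_{\mathbb{D}}=|e_1|_k\|x\|_{\mathbb{D}}=e_1\|x\|_1=0$, hence $e_1 x=0$, and therefore $e_1 p_1(x)=|e_1|_k\,p_{\mathbb{D}}(x)=p_{\mathbb{D}}(e_1 x)=0$, forcing $p_1(x)=0$; the surviving component is then bounded by rescaling only the second idempotent part exactly as before, giving $p_2(x)\le\frac{1}{\delta}\|x\|_2$. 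Combining these with the trivial case $x=0$ yields $p_{\mathbb{D}}(x)\le\frac{1}{\delta}\|x\|_{\mathbb{D}}$ for every $x\in X$, which completes the forward implication. I would close by noting that this characterization, in contrast to Zabreiko's lemma itself, invokes neither countable subadditivity nor completeness of the $F$-$\mathbb{B}\mathbb{C}$ module.
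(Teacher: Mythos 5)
Your proof is correct, and your backward implication coincides with the paper's: both rest on the reverse triangle inequality $|p_{\mathbb D}(x_n)-p_{\mathbb D}(x)|_k\le p_{\mathbb D}(x_n-x)\le\alpha\|x_n-x\|_{\mathbb D}$. For the forward implication, however, you take a genuinely different route. The paper argues by contradiction: it negates the conclusion to produce $x_n$ with $p_{\mathbb D}(x_n)>n\|x_n\|_{\mathbb D}$, normalizes $u_n=x_n/(n\|x_n\|_{\mathbb D})$, and contradicts sequential continuity at $0$. That argument silently assumes two things your direct proof makes explicit: first, that the failure of $p_{\mathbb D}(x)\le\alpha\|x\|_{\mathbb D}$ yields a strict reverse inequality, which is not automatic because $\le$ is only a \emph{partial} order on $\mathbb D$ (``not $\le$'' does not imply ``$>$''); second, that $\|x_n\|_{\mathbb D}$ is invertible in $\mathbb D$, which fails exactly in the zero-divisor case where one idempotent component of the norm vanishes. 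Your direct argument --- extracting a $\delta$-neighbourhood of $0$ from continuity, rescaling by the invertible hyperbolic scalar $e_1\,\delta/\|x\|_1+e_2\,\delta/\|x\|_2$, and disposing of a degenerate component via $\|x\|_1=0\Rightarrow e_1x=0\Rightarrow p_1(x)=0$ --- sidesteps both pitfalls, at the cost of being longer than the paper's normalization trick; the paper's version would need precisely your zero-divisor analysis and a correct treatment of the partial order to be airtight. Your closing remark that neither direction uses completeness of the $F$-$\mathbb{B}\mathbb{C}$ module is also accurate, and holds for the paper's proof as well despite the hypothesis appearing in the statement.
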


\begin{proof}
	 Suppose  there exists $\alpha \in \mathbb D^{+}$ such that $p_{\mathbb D} \leq \alpha \| x\|_{\mathbb D}$
	We want to show that $p_{\mathbb D}$ is continuous. Let $x_{n} \in X$ , $x_{n} \rightarrow x.$ Consider, $$p_{\mathbb D}(x_{n}) =p_{\mathbb D}(x_{n}-x+x)$$ $$ \leq p_{\mathbb D}(x_{n}-x) + p_{\mathbb D}(x)$$ $$ p_{\mathbb D}(x_{n})-p_{\mathbb D}(x) \leq p_{\mathbb D}(x_{n}-x).$$ Similarly we get,$$ p_{\mathbb D}(x)-p_{\mathbb D}(x_{n}) \leq p_{\mathbb D}(x_{n}-x)$$ so we get,$$ |p_{\mathbb D}(x_{n})-p_{\mathbb D}(x)| \leq p_{\mathbb D}(x_{n}-x)$$ we know that $|z|_{k} \leq \sqrt{2} |z|$ See \cite{ALSS}. Thus, $$|p_{\mathbb D}(x_{n})-p_{D}(x)|_{k} \leq p_{\mathbb D}(x_{n}-x)$$ since $p_{\mathbb D}(x) \leq \alpha \|x\|_{\mathbb D},$ $|p_{\mathbb D}(x_{n})-p_{\mathbb D}(x)|_{k} \leq \alpha \| x_{n}-x\|_{\mathbb D}$ As, $x_{n} \rightarrow x$ we have, $x_{n}-x \rightarrow 0$ therefore, $$|p_{\mathbb D}(x_{n})-p_{\mathbb D}(x)|_{k} \rightarrow 0$$ so, $p_{\mathbb D}(x_{n}) \rightarrow p_{\mathbb D}(x),$ hence $p_{\mathbb D}$ is continuous at x.\\
	Conversely, $p_{\mathbb D}$ is continuous. we want to prove that there exists $\alpha \in\mathbb  D^{+}$ such that $p_{\mathbb D}(x) \leq \alpha \| x\|_{\mathbb D}.$ Suppose for all $\alpha \in \mathbb D^{+}$, there exists $x_{n} \in X$ such that $p_{\mathbb D}(x_{n}) > \alpha \| x_{n}\|_{\mathbb D}$ i.e $$p_{\mathbb D}(x_{n}) - \alpha \| x_{n}\| \in \mathbb D^{+} $$ if $p_{\mathbb D}(x_{n}) =0$ then we get a contradiction. So $p_{\mathbb D}(x_{n}) \neq 0.$ $$ \frac{p_{\mathbb D}(x_{n})}{n \| x_{n}\|_{\mathbb D}} > 1$$ for $n \in \mathbb{N}.$ Let $u_{n}=\frac{x_{n}}{n \| x_{n}\|_{\mathbb D}}$ we observe that as $ n \rightarrow \infty$ , $\|u_{n}\|_{\mathbb D} \rightarrow 0.$ Hence $u_{n} \rightarrow 0$ Now as $p_{\mathbb D}$ is continuous , $p_{\mathbb D}(u_{n}) \rightarrow p_{\mathbb D}(0)=0$ But $p_{\mathbb D} >1$, this is a contradiction, so our assumption was wrong. Hence proved.
\end{proof}\noindent
Next result shows that continuous hyperbolic semi norm is countably sub additive. That is 
$$ p_{\mathbb D}(\sum_{k=1}^{\infty} x_{k}) \leq \sum_{k=1}^{\infty} p_{\mathbb D}(x_{k}).$$ 
\begin{lemma}
	If $p_{\mathbb D}$ is a continuous hyperbolic semi norm on X, a F-$\mathbb{B}\mathbb{C}$ module then $p_{\mathbb D}$ is countabley sub additive.
\end{lemma}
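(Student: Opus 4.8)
The plan is to derive countable sub-additivity from the finite sub-additivity encoded in axiom (c) together with the continuity of $p_{\mathbb D}$, by passing to the limit in the partial order on $\mathbb{D}$. Throughout I assume that $s=\sum_{k=1}^{\infty}x_{k}$ converges in the hyperbolic norm to some $s\in X$, so that the left-hand side is meaningful, and I write $s_{n}=\sum_{k=1}^{n}x_{k}$ for the partial sums. The cleanest way to organise the argument is through the idempotent decomposition $\alpha=e_{1}\alpha_{1}+e_{2}\alpha_{2}$: every inequality in $\mathbb{D}^{+}$ splits into the two real inequalities in its $e_{1}$- and $e_{2}$-components, and these may be treated independently. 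In particular, if the series $\sum_{k=1}^{\infty}p_{\mathbb D}(x_{k})$ fails to converge in one component, the claim is automatic there, so the content is the case in which the right-hand side converges in $\mathbb{D}^{+}$.

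First I would record finite sub-additivity: iterating axiom (c) gives, for every fixed $n$,
$$p_{\mathbb D}(s_{n})=p_{\mathbb D}\Big(\sum_{k=1}^{n}x_{k}\Big)\leq \sum_{k=1}^{n}p_{\mathbb D}(x_{k})\leq \sum_{k=1}^{\infty}p_{\mathbb D}(x_{k}).$$
Since $s_{n}\to s$ in $\|\cdot\|_{\mathbb D}$ and $p_{\mathbb D}$ is continuous, $p_{\mathbb D}(s_{n})\to p_{\mathbb D}(s)$ in $\mathbb{D}$, that is $|p_{\mathbb D}(s_{n})-p_{\mathbb D}(s)|_{k}\to 0$.

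The crux is to pass to the limit $n\to\infty$ in the inequality $p_{\mathbb D}(s_{n})\leq \sum_{k=1}^{\infty}p_{\mathbb D}(x_{k})$, i.e.\ to show that the order $\leq$ is preserved under hyperbolic-valued limits. I would argue this via closedness of $\mathbb{D}^{+}$: under the decomposition, convergence in $|\cdot|_{k}$ is exactly component-wise convergence of the two associated real sequences, and $\mathbb{D}^{+}$ corresponds to the closed first quadrant $\{(\alpha_{1},\alpha_{2}):\alpha_{1}\geq 0,\ \alpha_{2}\geq 0\}$. Since the difference $\sum_{k=1}^{\infty}p_{\mathbb D}(x_{k})-p_{\mathbb D}(s_{n})$ lies in $\mathbb{D}^{+}$ for every $n$ and converges to $\sum_{k=1}^{\infty}p_{\mathbb D}(x_{k})-p_{\mathbb D}(s)$, closedness forces this limit into $\mathbb{D}^{+}$ as well, which is precisely $p_{\mathbb D}(s)\leq\sum_{k=1}^{\infty}p_{\mathbb D}(x_{k})$.

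The main obstacle I anticipate is this final step: one must verify that the partial order interacts correctly with the hyperbolic-valued limit, and the safest route is the component-wise reduction together with the closedness of the non-negative reals under limits. The remaining ingredients—finite sub-additivity from axiom (c) and the convergence $p_{\mathbb D}(s_{n})\to p_{\mathbb D}(s)$ from continuity—are routine.
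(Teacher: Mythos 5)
Your proof is correct and follows essentially the same route as the paper's: finite sub-additivity from axiom (c) gives $p_{\mathbb D}(s_n)\leq\sum_{k=1}^{\infty}p_{\mathbb D}(x_k)$, continuity gives $p_{\mathbb D}(s_n)\to p_{\mathbb D}(s)$, and one passes to the limit in the inequality. If anything you are more careful than the paper, which simply invokes a $\limsup$ for the hyperbolic-valued sequence, whereas you justify the order-preservation under limits via the idempotent decomposition and the closedness of $\mathbb{D}^{+}$.
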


\begin{proof}
	Let $$x=\sum_{n=1}^{\infty} x_{n}$$ be a convergent in X. Assume, $$S_{n}= \sum_{k=1}^{\infty} x_{k}$$ so, $S_{n} \rightarrow x$ as $n \rightarrow \infty$. Since $p_{\mathbb D}$ is continuous, $p_{\mathbb D}(S_{n}) \rightarrow p_{\mathbb D}(x).$ $$ p_{\mathbb D}(x) = \limsup (p_{\mathbb D}(S_{n}))$$ $$ = \limsup (p_{\mathbb D} (\sum_{k=1}^{n} x_{k}))$$ so  $$p_{\mathbb D}(x) \leq \sum_{k=1}^{\infty} p_{\mathbb D}(x_{k})$$ hence we have, $$ p_{\mathbb D}(\sum_{k=1}^{\infty} x_{k}) \leq \sum_{k=1}^{\infty} p_{\mathbb D}(x_{k}).$$ 
\end{proof}

\begin{theorem}
Let $p_\mathbb D$ be a hyperbolic semi norm on a hyperbolic normed space $(X,\|\|_\mathbb D)$ and for $\alpha >0$ consider
$$ V_\alpha = \lbrace x \in X : p_\mathbb D(x) \leq \alpha \rbrace .$$ Suppose there is $a \in X$ and $r>0$ such that
$$ B[a,r] \subset \overline{V_\alpha}$$
Then for every $\delta >0$ we have
$$ B[0,\delta r) \subset \overline{V_ {\delta\alpha}}$$
\end{theorem}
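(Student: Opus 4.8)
The plan is to remove the centre $a$ from the ball $B[a,r]\subset\overline{V_\alpha}$ by exploiting the two structural features of the sublevel set $V_\alpha$, namely its symmetry about the origin and its convexity, and then to recover arbitrary $\delta$ by homogeneity. First I would record that $V_\alpha$ is symmetric. Since $-1=e_1(-1)+e_2(-1)$ we have $|{-1}|_k=e_1+e_2=1$, so $p_{\mathbb D}(-x)=|{-1}|_k\,p_{\mathbb D}(x)=p_{\mathbb D}(x)$ and likewise $\|{-x}\|_{\mathbb D}=\|x\|_{\mathbb D}$. Hence $-V_\alpha=V_\alpha$, and as $x\mapsto -x$ is a homeomorphism, $-\overline{V_\alpha}=\overline{V_\alpha}$. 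Applying this to the hypothesis gives $B[-a,r]=-B[a,r]\subset -\overline{V_\alpha}=\overline{V_\alpha}$.

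Next I would establish convexity. For $u,v\in V_\alpha$ and $t\in[0,1]\subset\R$ (so that $|t|_k=t$ and $|1-t|_k=1-t$), subadditivity and homogeneity give $p_{\mathbb D}(tu+(1-t)v)\le t\,p_{\mathbb D}(u)+(1-t)\,p_{\mathbb D}(v)$; since $\alpha-p_{\mathbb D}(u)$ and $\alpha-p_{\mathbb D}(v)$ lie in $\mathbb{D}^+$, and $\mathbb{D}^+$ is a cone closed under addition and under multiplication by nonnegative reals, the right-hand side is $\le\alpha$. Thus $V_\alpha$ is convex, and therefore so is its closure $\overline{V_\alpha}$. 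The one point demanding care here, and the only genuinely $\mathbb{D}$-specific step, is precisely this manipulation of the partial order: that $\le$ on $\mathbb{D}$ is preserved under addition and under scaling by positive reals, which is exactly the assertion that $\mathbb{D}^+$ is an additive cone.

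With symmetry and convexity in hand, the case $\delta=1$ is immediate. Given $z$ with $\|z\|_{\mathbb D}<r$, both $a+z\in B[a,r]\subset\overline{V_\alpha}$ and $-a+z\in B[-a,r]\subset\overline{V_\alpha}$, so their midpoint $\tfrac12\big((a+z)+(-a+z)\big)=z$ lies in $\overline{V_\alpha}$ by convexity; hence $B[0,r)\subset\overline{V_\alpha}$.

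Finally I would pass to arbitrary $\delta>0$ by scaling. Since $\delta>0$ is real we have $|\delta|_k=\delta$, so $x\in V_\alpha$ iff $p_{\mathbb D}(\delta x)=\delta\,p_{\mathbb D}(x)\le\delta\alpha$ iff $\delta x\in V_{\delta\alpha}$; that is, $\delta V_\alpha=V_{\delta\alpha}$. As multiplication by the nonzero scalar $\delta$ is a homeomorphism it commutes with closure, giving $\delta\,\overline{V_\alpha}=\overline{V_{\delta\alpha}}$. Combining this with the previous step and with $\delta B[0,r)=B[0,\delta r)$ yields $B[0,\delta r)=\delta B[0,r)\subset\delta\,\overline{V_\alpha}=\overline{V_{\delta\alpha}}$, the desired inclusion. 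I do not anticipate a serious obstacle, since this is the classical centre-removal-and-scaling argument; the only points needing attention are the bookkeeping identities $|{-1}|_k=1$ and $|\delta|_k=\delta$, together with the order-cone properties of $\mathbb{D}^+$ invoked in the convexity step.
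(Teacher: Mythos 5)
Your proposal is correct and takes essentially the same route as the paper: both remove the centre $a$ by expressing a point of norm at most $r$ as the midpoint of two elements of $\overline{V_\alpha}$ obtained from $B[a,r]$ and its reflection, and both recover general $\delta$ by scaling with $|\delta|_k=\delta$. The only difference is packaging — you factor the midpoint step through explicit symmetry and convexity of $\overline{V_\alpha}$, while the paper performs the same computation directly on approximating sequences $u_n\to x+a$, $v_n\to -x+a$ via $x_n=\tfrac{1}{2}(u_n-v_n)$.
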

\begin{proof}
Let $\delta=1$ . We shall show that 
$$ B[0, r] \subset \overline{V_ {\alpha}}$$
Let $ x \; B[0, r]$ so that
$$\|x\|_D \leq r$$
Since $$ \|(x+a)-a\|_D \leq r,\; \|(-x+a) -a\|_D \leq r.$$
So we have sequences $u_n$ and $v_n$ such that $u_n \rightarrow x+a$ and $ v_n \rightarrow -x+a.$
Let $x_n= \frac{u_n-v_n}{2}.$
Since $p_\mathbb D$ is a hyperbolic semi norm 
$$ p_\mathbb D(x_n) \leq \frac{ p_\mathbb D(u_n)+p_\mathbb D(v_n)}{2}\leq \alpha  $$ as $|\frac{1}{2}|_k=\frac{1}{2}$
and $$ x_n \rightarrow \frac{[(x+a)-(-x+a)]}{2} =x.$$
Hence $ x \in \overline{V_\alpha}.$
For any $ \delta >0$  and $x \in X$ with $\|x\|_D\leq \delta r.$
Let $y=\frac{x}{\delta}.$ Then
$$ \|y\|_D =\frac{1}{|\delta|_k}\|x\|= \frac{1}{\delta}\|x\|_D\leq r $$
By the earlier proof, $ y \in \overline{V_\alpha}$ So there is a sequence $y_n \in V_\alpha $ such that $ y_n \rightarrow y$
Hence $p(y_n) \leq \alpha. $
So $$|\delta|_kp_\mathbb D(y_n)= p_\mathbb D( \delta y_n)=\delta p_\mathbb D(y_n) \leq \delta \alpha, $$
and $ \delta y_n \rightarrow \delta y= x.$ So $x\in \overline{V_{\delta \alpha}}.$
\end{proof} 
\begin{theorem}
	Let $p_\mathbb D$ be a countabely subadditive hyperbolic seminorm on an $F$ $ \mathbb{B}\mathbb{C}$ $X.$ Then $p_\mathbb D$ is continuous on $X.$ 
\end{theorem}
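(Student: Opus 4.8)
The plan is to obtain the linear bound demanded by Lemma~3.1 and then invoke that lemma to conclude continuity. For $\alpha>0$ put $V_\alpha=\{x\in X: p_{\mathbb D}(x)\le\alpha\}$ as in Theorem~3.3, where the real number $\alpha$ is identified with $\alpha e_1+\alpha e_2\in\mathbb D^+$. First I would cover $X$: for each $x$ the value $p_{\mathbb D}(x)=e_1p_1(x)+e_2p_2(x)$ is a fixed element of $\mathbb D^+$, so any natural number $n\ge\max\{p_1(x),p_2(x)\}$ satisfies $p_{\mathbb D}(x)\le n$, whence $x\in V_n$ and $X=\bigcup_{n=1}^\infty V_n\subset\bigcup_{n=1}^\infty\overline{V_n}$. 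If every $\overline{V_n}$ had empty interior, then each $U_n=X\setminus\overline{V_n}$ would be open and dense and, by the Baire Category Theorem (Theorem~2.1), $\bigcap_nU_n$ would be dense and in particular nonempty, contradicting $\bigcup_n\overline{V_n}=X$. Hence some $\overline{V_N}$ has nonempty interior, so there are $a\in X$ and $r>0$ with $B[a,r]\subset\overline{V_N}$.

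Feeding this ball into Theorem~3.3 gives $B[0,\delta r)\subset\overline{V_{\delta N}}$ for every $\delta>0$. Taking $\delta=1$ yields $B[0,r)\subset\overline{V_N}$, and taking $\delta=2^{-m}$ yields the nested family $B[0,r/2^m)\subset\overline{V_{N/2^m}}$ that drives the approximation.

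The heart of the argument, and the step I expect to be the main obstacle, is stripping off the closure by successive approximation, which is exactly where the countable subadditivity hypothesis is used. Fix $x$ with $\|x\|_{\mathbb D}<r$; since $x\in\overline{V_N}$ I choose $x_1\in V_N$ with $\|x-x_1\|_{\mathbb D}<r/2$. Then $x-x_1\in B[0,r/2)\subset\overline{V_{N/2}}$, so I choose $x_2\in V_{N/2}$ with $\|x-x_1-x_2\|_{\mathbb D}<r/4$, and inductively I obtain $x_m$ with $p_{\mathbb D}(x_m)\le N/2^{m-1}$ and $\|x-\sum_{k=1}^m x_k\|_{\mathbb D}<r/2^m$. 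The remainders tend to $0$, so $\sum_m x_m$ converges to $x$ in the hyperbolic norm, and countable subadditivity gives
$$p_{\mathbb D}(x)\le\sum_{m=1}^\infty p_{\mathbb D}(x_m)\le\sum_{m=1}^\infty\frac{N}{2^{m-1}}=2N.$$
The points that need care here are that every one of these inequalities lives in the partial order on $\mathbb D$, so I must verify that the bounds $p_{\mathbb D}(x_m)\le N/2^{m-1}$ really add in $\mathbb D^+$ and that the dominating hyperbolic series converges to $2N$, and that the approximation radii $r/2^m$ are honest real numbers so that $\|x-\sum_{k\le m}x_k\|_{\mathbb D}\to0$ is genuine convergence in the underlying complete metric supplied by the $F$-$\mathbb{BC}$ structure.

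Finally I would homogenize to reach the form required by Lemma~3.1. When $\|x\|_{\mathbb D}=e_1a_1+e_2a_2$ is strictly positive its inverse $\|x\|_{\mathbb D}^{-1}=e_1/a_1+e_2/a_2$ lies in $\mathbb D^+$, so for $\varepsilon>0$ the scalar $\mu=(r-\varepsilon)\|x\|_{\mathbb D}^{-1}\in\mathbb D^+$ satisfies $|\mu|_k=\mu$ and $\|\mu x\|_{\mathbb D}=(r-\varepsilon)<r$; the bound just proved then yields $\mu\,p_{\mathbb D}(x)=p_{\mathbb D}(\mu x)\le 2N$, and multiplying by the positive hyperbolic number $\|x\|_{\mathbb D}/(r-\varepsilon)$ and letting $\varepsilon\to0$ gives $p_{\mathbb D}(x)\le(2N/r)\|x\|_{\mathbb D}$. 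The boundary case in which one idempotent component of $\|x\|_{\mathbb D}$ vanishes is absorbed by the decomposition $x=e_1x+e_2x$, which forces the corresponding component of $p_{\mathbb D}(x)$ to vanish as well and so keeps the estimate valid; equivalently, one may simply run the two real estimates attached to $e_1$ and $e_2$ separately. With $\alpha=2N/r\in\mathbb D^+$ we obtain $p_{\mathbb D}(x)\le\alpha\|x\|_{\mathbb D}$ for all $x$, and Lemma~3.1 immediately gives the continuity of $p_{\mathbb D}$.
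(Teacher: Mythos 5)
Your proposal is correct and follows essentially the same route as the paper: Baire category to find $B[a,r]\subset\overline{V_N}$, Theorem~3.3 to recentre at the origin, a successive-approximation argument peeling off the closure, countable subadditivity to sum the resulting bounds, and Lemma~3.1 to conclude continuity. The only difference is bookkeeping --- you prove a uniform bound $p_{\mathbb D}(x)\le 2N$ on the ball $\|x\|_{\mathbb D}<r$ and then homogenize by scaling (with an explicit treatment of the degenerate case where an idempotent component of $\|x\|_{\mathbb D}$ vanishes), whereas the paper builds the factor $\|x\|_{\mathbb D}/r$ into the radii $\epsilon_k r$ from the start to reach $p_{\mathbb D}(x)\le\frac{m}{r}\|x\|_{\mathbb D}+\epsilon$ directly.
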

\begin{proof}
For $n=1,2,\ldots,$
$$ V_n= \lbrace x \in X : p_\mathbb D(x) \leq n \rbrace.$$
Then $$ X= \bigcup_{n=1}^\infty V_n = \bigcup_{n=1}^\infty \overline{V_n}.  $$
Hence $$ \bigcap_{n=1}^\infty (\overline{V_n})^c = \phi$$
By the Baire Category theorem, we know that in a complete metric space, interesection of dense open sets is open. There exist atleast open open set say $(\overline{V_m})^c$ which is not dense. This implies there exist $a \in X $ such that $ a$ does not belong to the closure of $(\overline{V_m})^c$. That means that there exists $r>0$ such that $$B[a,r] \bigcap (\overline{V_m})^c=\phi$$
We have $$ B[a,r] \subset \overline{V_m}.$$
For this purpose, we shall show that 
$$ p_\mathbb D(x) \leq (m/r) \|x\|_D + \epsilon$$ for every $ \epsilon \in \mathrm{D}^+$ strictly positive and for all $ x \in X.$
Define $\epsilon_0 = \frac{\|x\|_D}{r}$ and write
$$ \epsilon= \sum_{k=1}^\infty \frac{\epsilon}{2^k}=m\sum_{k=1}^\infty \frac{\epsilon}{m 2^k} = m \sum_{k=1}^\infty \epsilon_k$$
Since $\|x\|_\mathbb D= \epsilon_0 r$ we have $ x \in B[0, \epsilon r]$. by the above result we have
$$ x \in \overline{V_{\epsilon_0 m}}.$$
This implies
$$ B[x,\epsilon r] \bigcap V_{\epsilon_0 m} \neq \phi .$$
So there exist $x_1 \in B[x,\epsilon r] \bigcap V_{\epsilon_0 m} . $
So we have $\|x-x_1\|_\mathbb D \leq\epsilon_1 r $ and $ p_\mathbb D(x_1) \leq \epsilon_0 m.$
As a result $$x-x_1 \in B[0,\epsilon_1 r] \subset \overline{V_{\epsilon_1 m}} $$
Let $u_1=x-x_1$ so that $x=x_1+u_1.$
$B[u_1,\epsilon_{2}r] \bigcap V_{\epsilon_1 m} \neq \phi $ There exist $x_2 \in B[u_1,\epsilon_{2}r] \bigcap V_{\epsilon_1 m} .$
So we have $$ \|u_1-x_2\|_\mathbb D \leq \epsilon_{2}r,\; p_\mathbb D(x_2) \leq \epsilon_1 m. $$
Let $ u_2=u_1-x_2$ so that
$$ x=x_1 + u_1 = x_1+x_2+u_2.$$
Continuing this way, we find for each $k=1,2,\ldots,u_k \in X $ such that
$$ p_\mathbb D(x_k) \leq \epsilon_{k-1}m,\;\|u_{k-1}-x_k\|_\mathbb D \leq \epsilon_k r  $$ and $u_k=u_{k-1}-x_k.$ As a result
$$x=x_1+u_1= x_1+ x_2+u_2=x_1+x_2+ \ldots + x_k+u_k.$$
As a consequence
$$ \|x- \sum_{k=1}^n x_k\|_\mathbb D = \|u_n\|_\mathbb D \leq \epsilon_n r = \frac{\epsilon r}{m 2^n}\rightarrow 0 $$ as $ n \rightarrow \infty.$ We see that 
$$ x= \sum_{k=1}^ \infty x_k.$$
By the countable sub additivity of the semi norm 
$$ p_\mathbb D(x) \leq \sum_{k=1}^ \infty p_\mathbb D(x_k) \leq m \sum_{k=1}^\infty \epsilon_{k-1} = m \epsilon_{0} + m \sum_{k=1}^\infty \epsilon_k = \frac{m}{r} \|x\|_D + \epsilon.$$
This shows that $p_\mathbb D(x) \leq \frac{m}{r} \|x\|_\mathbb D.$
\end{proof} 
 \section{Application}
We shall now prove the Closed Graph theorem. 
\begin{definition}
A $\mathbb{B}\mathbb{C}$ linear operator $T: X \rightarrow Y$ is said to be closed if its graph is closed in $ X \times Y$. A $\mathbb{B}\mathbb{C}$-linear operator $T$ is closed whenever $x_{n} \rightarrow x$, $Tx_{n} \rightarrow y$, then $Tx =y. $
\end{definition}
\begin{theorem}
 Let $X$ and $Y$ be F-$\mathbb{B}\mathbb{C}$ modules and $T: X \rightarrow Y$ be a closed $\mathbb{B}\mathbb{C}$-linear map. Then $T$ is continuous.
\end{theorem}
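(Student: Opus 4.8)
The plan is to derive continuity of $T$ from Zabreiko's Lemma by pulling the target norm back to $X$ through $T$. Following the construction recorded just after the definition of a hyperbolic seminorm, I would set
$$ p_{\mathbb D}(x) = \|Tx\|_{\mathbb D, Y}, \qquad x \in X. $$
Because $T$ is $\mathbb{BC}$-linear and $\|\cdot\|_{\mathbb D, Y}$ is a hyperbolic norm, the two seminorm axioms are immediate: $p_{\mathbb D}(\mu x) = \|T(\mu x)\|_{\mathbb D, Y} = \|\mu\, Tx\|_{\mathbb D, Y} = |\mu|_k\, p_{\mathbb D}(x)$, and the triangle inequality for $\|\cdot\|_{\mathbb D, Y}$ transfers to give $p_{\mathbb D}(x+y) \leq p_{\mathbb D}(x) + p_{\mathbb D}(y)$. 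Thus $p_{\mathbb D}$ is a hyperbolic seminorm on the $F$-$\mathbb{BC}$ module $X$, and the whole argument is reduced to checking the hypothesis of Zabreiko's Lemma, namely countable subadditivity.

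The crux is exactly this countable-subadditivity step, and it is the only place where the closedness of $T$ and the completeness of $Y$ enter. Suppose $x = \sum_{n=1}^\infty x_n$ converges in $X$; I must show $p_{\mathbb D}(x) \leq \sum_{n=1}^\infty p_{\mathbb D}(x_n)$. If the right-hand series fails to converge in $\mathbb{D}^+$ there is nothing to prove, so assume $\sum_{n=1}^\infty \|Tx_n\|_{\mathbb D, Y}$ converges. Then $\sum_{n=1}^\infty Tx_n$ is absolutely summable in $Y$, and since $Y$ is an $F$-$\mathbb{BC}$ module every absolutely summable series converges; let $y = \sum_{n=1}^\infty Tx_n \in Y$. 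Writing $s_n = \sum_{k=1}^n x_k$, linearity of $T$ gives $Ts_n = \sum_{k=1}^n Tx_k \to y$, while $s_n \to x$ by hypothesis. The closed-graph hypothesis then forces $Tx = y$.

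With $Tx = y$ in hand, I note that $\|\cdot\|_{\mathbb D, Y}$ is itself a continuous hyperbolic seminorm (it satisfies $\|z\|_{\mathbb D, Y} \leq 1\cdot\|z\|_{\mathbb D, Y}$ with $1 \in \mathbb{D}^+$, so the continuity characterization applies) and hence countably subadditive by the preceding lemma on continuous seminorms. Therefore
$$ p_{\mathbb D}(x) = \|Tx\|_{\mathbb D, Y} = \Big\| \sum_{n=1}^\infty Tx_n \Big\|_{\mathbb D, Y} \leq \sum_{n=1}^\infty \|Tx_n\|_{\mathbb D, Y} = \sum_{n=1}^\infty p_{\mathbb D}(x_n). $$
So $p_{\mathbb D}$ is a countably subadditive hyperbolic seminorm on $X$, and Zabreiko's Lemma yields that $p_{\mathbb D}$ is continuous on $X$. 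Finally I would invoke the continuity characterization once more: continuity of $p_{\mathbb D}$ produces $\alpha \in \mathbb{D}^+$ with $p_{\mathbb D}(x) \leq \alpha \|x\|_{\mathbb D, X}$, that is, $\|Tx\|_{\mathbb D, Y} \leq \alpha \|x\|_{\mathbb D, X}$ for all $x$. Hence $T$ is $\mathbb{D}$-bounded, which is equivalent to continuity for $\mathbb{BC}$-linear maps, completing the proof.

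I expect the main obstacle to be the countable-subadditivity step, where three facts must be aligned simultaneously: the absolute summability of $\sum Tx_n$ (to invoke completeness of $Y$), the linearity of $T$ (to identify $\lim Ts_n$ with $y$), and the closed-graph hypothesis (to pin down $Tx = y$). A secondary technical point is to confirm that the ordered hyperbolic setting does not obstruct the comparison of series in $\mathbb{D}^+$, but this is handled by the componentwise $e_1, e_2$ decomposition established in the introduction.
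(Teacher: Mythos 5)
Your proposal is correct and follows essentially the same route as the paper: define $p_{\mathbb D}(x)=\|Tx\|_{\mathbb D,Y}$, use absolute summability of $\sum Tx_n$ in the complete module $Y$ together with closedness of $T$ to identify $Tx=\sum Tx_n$, deduce countable subadditivity, and conclude via Zabreiko's Lemma and the continuity characterization. The only cosmetic difference is that you justify $\bigl\|\sum_n Tx_n\bigr\|_{\mathbb D,Y}\leq\sum_n\|Tx_n\|_{\mathbb D,Y}$ by citing countable subadditivity of the norm itself, whereas the paper passes to the limit of the partial sums directly; these are the same computation.
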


\begin{proof}
	 X, Y are F-$\mathbb{B}\mathbb{C}$ Module. $(X,\| .\|_{\mathbb D,X})$ and $(Y,\| .\|_{\mathbb D,Y})$ are complete hyperbolic normed spaces. $p_{\mathbb D}(x) = \| Tx\|_{\mathbb D,Y}$ is a hyperbolic semi norm. Let $\sum_{n=1}^{\infty} x_{n}$ be  convergent  along with $\sum_{n=1}^{\infty} p_{\mathbb D}(x_{n})$ also summable . i.e $\displaystyle \sum_{n=1}^{\infty} \| Tx_{n}\|_{\mathbb D,Y}$ is convergent. So,  $\displaystyle \sum_{n=1}^{\infty} Tx_{n}$ is absolutely summable. As Y is F-$\mathbb{B}\mathbb{C}$ module, absolutely $\implies$ summable. Hence  $\displaystyle \sum_{n=1}^{\infty} Tx_{n}$ is convergent.let $$ \sum_{n=1}^{\infty} x_{n} = x$$ and $$ \sum_{n=1}^{\infty} Tx_{n}.$$ let $$ S_{n}=\sum_{k=1}^{n} x_{k}$$ so, $S_{n} \rightarrow x$ as $n \rightarrow \infty$ Also, $$ T(s_{n}) = \sum_{k=1}^{n} Tx_{k}$$ $T(S_{n}) \rightarrow y$ as $ n\rightarrow \infty.$ Now as T is closed, $Tx=y.$ hence, $$ \| Tx\|_{\mathbb D,Y} = \| y\|_{\mathbb D,Y}$$ and $$ \| Ts_{n}\|_{\mathbb D,Y} \rightarrow \| y\|_{\mathbb D,Y}$$ so we get, $$ \|Tx\|_{\mathbb D,Y} = \| y\|_{\mathbb D,Y} = \lim_{n\to\infty} \|Ts_{n}\|_{\mathbb D,Y}$$ $$ = \lim_{n\to\infty} \|T(\sum_{k=1}^{n} x_{k})\|_{\mathbb D,Y}$$ $$ \lim_{n\to\infty} \| \sum_{k=1}^{n} Tx_{k}\|_{D,Y}$$ $$ \leq \lim_{n\to\infty} \sum_{k=1}^{n} \|Tx_{k}\|_{\mathbb D,Y}$$ $$ \leq \sum_{n=1}^{\infty} \| Tx_{n}\|_{\mathbb D,Y}$$ Hence we get, $$ p_{\mathbb D}(x)=p_{\mathbb D}(\sum_{n=1}^{\infty} x_{n}) \leq \sum_{n=1}^{\infty} p_{\mathbb D}(x_{k})$$ therefore,$p_{\mathbb D}$ is countabley subadditive. By Zabreiko's lemma $p_{\mathbb D}$ is continuous. hence, there exists $\alpha \in \mathbb D^{+}$ such that $$ \|Tx\|_{\mathbb D,Y} \leq \alpha \| x\|_{\mathbb D,X}.$$ Hence T is continuous. 
\end{proof}
\noindent 
We shall now prove the Uniform boundedness principle.
\begin{theorem} Let X be F-$\mathbb{B}\mathbb{C}$ module and $P_{0}$ be the set of continuous hyperbolic semi norms on X such that the set $ \lbrace p_{D}(x) : p_{D} \in P_{0} \rbrace$ is $\mathbb{D}$-bounded for each $x \in X$, then, there exists $\delta \in D^{+}$ such that $p_{D}(x) \leq \delta \| x\|_{D}$ for all $x \in X$ and for all $p_{D} \in P_{0}.$
	\end{theorem}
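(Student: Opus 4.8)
The plan is to collapse the entire family $P_0$ into a single hyperbolic seminorm obtained as a pointwise supremum, and then to feed that seminorm into Zabreiko's lemma (Theorem 3.4). First I would define, for each $x \in X$,
$$ q_{\mathbb D}(x) = \sup\{\, p_{\mathbb D}(x) : p_{\mathbb D} \in P_0 \,\}, $$
where the supremum is taken in the partial order $\leq$ on $\mathbb D$. Since $\leq$ is only a partial order, this supremum must be assembled componentwise: writing each member in idempotent form $p_{\mathbb D} = e_1 p^{(1)} + e_2 p^{(2)}$ with $p^{(1)}(x), p^{(2)}(x) \geq 0$ real, the hypothesis that $\{\, p_{\mathbb D}(x) : p_{\mathbb D} \in P_0\,\}$ is $\mathbb D$-bounded says precisely that for each fixed $x$ the two real sets $\{p^{(1)}(x)\}$ and $\{p^{(2)}(x)\}$ are bounded above in $\mathbb R$. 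Hence I may set
$$ q_{\mathbb D}(x) = e_1 \sup_{p_{\mathbb D} \in P_0} p^{(1)}(x) + e_2 \sup_{p_{\mathbb D} \in P_0} p^{(2)}(x), $$
which lies in $\mathbb D^+$ and satisfies $p_{\mathbb D}(x) \leq q_{\mathbb D}(x)$ for every $p_{\mathbb D} \in P_0$.

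Next I would check that $q_{\mathbb D}$ is itself a hyperbolic seminorm. Both required properties reduce, through the $e_1, e_2$ decomposition, to elementary facts about real suprema. For homogeneity, writing $\mu = e_1 \mu_1 + e_2 \mu_2$ so that $|\mu|_k = e_1 |\mu_1| + e_2 |\mu_2|$, a nonnegative real factor passes through a supremum in each component, giving $q_{\mathbb D}(\mu x) = |\mu|_k\, q_{\mathbb D}(x)$. For subadditivity, the inequality $\sup(a+b) \leq \sup a + \sup b$ applied in each component yields $q_{\mathbb D}(x+y) \leq q_{\mathbb D}(x) + q_{\mathbb D}(y)$. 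These verifications are routine once the componentwise picture is fixed.

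The essential input is countable subadditivity of $q_{\mathbb D}$. Since every $p_{\mathbb D} \in P_0$ is continuous, Lemma 3.2 shows each is countably subadditive. Given a convergent series $\sum_{k} x_k$ with sum $x$, I would estimate, for a fixed $p_{\mathbb D} \in P_0$,
$$ p_{\mathbb D}(x) \leq \sum_{k=1}^\infty p_{\mathbb D}(x_k) \leq \sum_{k=1}^\infty q_{\mathbb D}(x_k), $$
the second inequality being termwise. The right-hand bound is independent of $p_{\mathbb D}$, so taking the supremum over $p_{\mathbb D} \in P_0$ on the left gives $q_{\mathbb D}(x) \leq \sum_{k=1}^\infty q_{\mathbb D}(x_k)$. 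Thus $q_{\mathbb D}$ is a countably subadditive hyperbolic seminorm on the $F$-$\mathbb{B}\mathbb{C}$ module $X$. By Zabreiko's lemma $q_{\mathbb D}$ is continuous, and then the continuity characterization (Lemma 3.1) supplies $\delta \in \mathbb D^+$ with $q_{\mathbb D}(x) \leq \delta \|x\|_{\mathbb D}$ for all $x \in X$. Since $p_{\mathbb D}(x) \leq q_{\mathbb D}(x)$ for every $p_{\mathbb D} \in P_0$, this yields the desired uniform estimate $p_{\mathbb D}(x) \leq \delta \|x\|_{\mathbb D}$ for all $x \in X$ and all $p_{\mathbb D} \in P_0$.

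I expect the only genuine obstacle to be the correct construction and manipulation of the $\mathbb D$-valued supremum: because $\leq$ on $\mathbb D$ is a partial and not a total order, the supremum does not exist for formal order-theoretic reasons and must be built from the two real componentwise suprema, with the $\mathbb D$-boundedness hypothesis invoked exactly to guarantee their finiteness at each point. Once $q_{\mathbb D}$ is in hand, the rest is the standard Zabreiko argument and presents no further difficulty.
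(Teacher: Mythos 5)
Your proposal follows essentially the same route as the paper: both define the pointwise supremum $p^*(x)=\sup\{p_{\mathbb D}(x):p_{\mathbb D}\in P_0\}$, verify that it is a countably subadditive hyperbolic seminorm, and then invoke Zabreiko's lemma together with the continuity characterization to obtain the uniform bound. Your version is in fact more careful than the paper's on two points the paper glosses over --- constructing the supremum componentwise via the $e_1,e_2$ decomposition since $\leq$ is only a partial order on $\mathbb D$, and explicitly citing the countable subadditivity of each individual $p_{\mathbb D}$ (Lemma 3.2) when passing the supremum through the series --- but the underlying argument is identical.
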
 
\vspace{0.2cm}

\begin{proof} define $p^{*} : X \rightarrow \mathbb D^{+}$ as $$ p^{*}(x) = \sup\lbrace p_{\mathbb D}(x) , p_{\mathbb D} \in P_{0} \rbrace$$ It is clearly well defined. Now we need to first show that this is a semi norm. So proving the triangle inequality is just trivial. now, $$p^{*}(\alpha x) = \sup \lbrace p_{\mathbb D}(\alpha x) : p_{\mathbb D} \in P_{0} , \alpha \in \mathbb{B}\mathbb{C} \rbrace$$ $$ = \sup \lbrace | \alpha|_{k} p_{\mathbb D}(x) : p_{\mathbb D} \in P_{0} , \alpha \in \mathbb{B}\mathbb{C} \rbrace$$ $$= | \alpha|_{k} \sup \lbrace p_{\mathbb D}(x) : p_{\mathbb D} \in P_{0} , \alpha \in \mathbb{B}\mathbb{C} \rbrace$$ $$ = | \alpha|_{k} p^{*}(x)$$ Thus, $p^{*}$ is a hyperbolic semi norm on X. Now we will show that it is countabley sub additive. Let $\sum_{k=1}^{\infty} x_{k}$ be convergent along with $\sum_{k=1}^{\infty} p^{*}(x_{k}).$ let $$ \sum_{k=1}^{\infty} x_{k} = x$$ $$ p^{*}(x) = \sup \lbrace p_{\mathbb D}(\sum_{k=1}^{\infty} x_{k} : p_{\mathbb D} \in P_{0} \rbrace$$ $$ \leq \ \lbrace \sum_{k=1}^{\infty} p_{\mathbb D}(x_{k}) : p_{\mathbb D} \in P_{0} \rbrace $$ $$ \leq \sum_{k=1}^{\infty} \sup \lbrace p_{D} (x_{k}) : p_{D} \in P_{0} \rbrace.$$ Thus, $$ p^{*}(\sum_{k=1}^{\infty}x_k) \leq \sum_{k=1}^{\infty} p^{*}(x_{k}).$$ Hence, $p^{*}$ is countabley sub additive hyperbolic semi norm on X.By Zabreiko's lemma, $p^{*}$ is continuous. Hence  there exists $\delta \in D^{+}$ , $$p^{*}(x) \leq \delta \| x\|_\mathbb {D}$$ So, we get, $$ p_{\mathbb D}(x) \leq p^{*} \leq \delta \| x\|_{D}$$ for all $x \in X$ and for all $p_{\mathbb D} \in P_{0}.$ Hence proved.
	\end{proof}
 \begin{theorem}
	Let X be a F-$\mathbb{B}\mathbb{C}$ Module. for each $s \in S$, let $(Y_{s},\| .\|_{\mathbb  D,s})$ be a hyperbolic normed space. Let $T_{s}$ be a continuous $\mathbb{B}\mathbb{C}$-linear map. $T_{s} : X \rightarrow Y_{s}$ be such that for each $x \in X,$ $\lbrace \| T_{s}(x)\|_{\mathbb  D} : s\in S \rbrace$ is $\mathbb{D}$-bounded then the set $\lbrace \| T_{s}\|_{\mathbb D} : s \in S \rbrace$ is $\mathbb{D}$-bounded.
\end{theorem}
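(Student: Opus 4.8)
The plan is to deduce this operator form of the uniform boundedness principle from the seminorm form established in the preceding theorem. For each $s \in S$ I would define $p_s \colon X \to \mathbb{D}^+$ by $p_s(x) = \|T_s(x)\|_{\mathbb{D},s}$. As already observed in the preliminaries, a map of this shape is automatically a hyperbolic seminorm on $X$: the homogeneity $p_s(\mu x) = |\mu|_k\,p_s(x)$ follows from the $\mathbb{BC}$-linearity of $T_s$ together with $\|\mu\,T_s(x)\|_{\mathbb{D},s} = |\mu|_k\|T_s(x)\|_{\mathbb{D},s}$, and the triangle inequality for $p_s$ follows from additivity of $T_s$ and subadditivity of $\|\cdot\|_{\mathbb{D},s}$.

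First I would check that each $p_s$ is continuous. Since $T_s$ is continuous, $x_n \to x$ forces $T_s(x_n) \to T_s(x)$ in $Y_s$, and then $p_s(x_n) = \|T_s(x_n)\|_{\mathbb{D},s} \to \|T_s(x)\|_{\mathbb{D},s} = p_s(x)$ because the hyperbolic norm is itself continuous (this is exactly the $|p_{\mathbb{D}}(x_n)-p_{\mathbb{D}}(x)|_k \le p_{\mathbb{D}}(x_n-x)$ type estimate used in the proof of Lemma 3.1). Hence each $p_s$ is a continuous hyperbolic seminorm, and the family $P_0 = \lbrace p_s : s \in S \rbrace$ consists of continuous hyperbolic seminorms on $X$.

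Next I would match the hypotheses of the preceding theorem. The assumption that $\lbrace \|T_s(x)\|_{\mathbb{D},s} : s \in S \rbrace$ is $\mathbb{D}$-bounded for each fixed $x$ is precisely the statement that $\lbrace p(x) : p \in P_0 \rbrace$ is $\mathbb{D}$-bounded for every $x \in X$. Since $X$ is a complete F-$\mathbb{BC}$ module, the hypotheses of the seminorm uniform boundedness principle are satisfied, so there exists $\delta \in \mathbb{D}^+$ with $p_s(x) = \|T_s(x)\|_{\mathbb{D},s} \le \delta\,\|x\|_{\mathbb{D}}$ for every $x \in X$ and every $s \in S$. This single estimate says that every $T_s$ is $\mathbb{D}$-bounded with the common bound $\delta$, and since the operator norm $\|T_s\|_{\mathbb{D}}$ is the least such bound, I would conclude $\|T_s\|_{\mathbb{D}} \le \delta$ for all $s$, i.e. $\lbrace \|T_s\|_{\mathbb{D}} : s \in S \rbrace$ is $\mathbb{D}$-bounded.

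The one point deserving care—the main obstacle—is this final translation: because $\le$ is only a partial order on $\mathbb{D}$, one must verify that the supremum defining $\|T_s\|_{\mathbb{D}}$ exists and respects the bound $\delta$. I would handle this by passing to the idempotent representation, writing $\delta = \delta_1 e_1 + \delta_2 e_2$ and $\|T_s(x)\|_{\mathbb{D},s}$ in its $e_1$- and $e_2$-components. The inequality $\|T_s(x)\|_{\mathbb{D},s} \le \delta\|x\|_{\mathbb{D}}$ then splits into two genuine real inequalities, each of which may be supremized over the unit ball exactly as in the classical scalar argument; reassembling the two idempotent components recovers $\|T_s\|_{\mathbb{D}} \le \delta$ uniformly in $s$, which is the asserted $\mathbb{D}$-boundedness of the family of operator norms.
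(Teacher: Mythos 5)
Your proposal is correct and follows essentially the same route as the paper: define $p_s(x)=\|T_s(x)\|_{\mathbb D,s}$, verify that $\{p_s\}$ is a family of continuous hyperbolic seminorms that is pointwise $\mathbb D$-bounded, and invoke the seminorm form of the uniform boundedness principle to obtain a uniform $\delta$ with $\|T_s(x)\|_{\mathbb D,s}\leq\delta\|x\|_{\mathbb D}$. Your closing discussion of the idempotent decomposition to justify passing from this uniform estimate to a bound on the operator norms is actually more careful than the paper's own final step, which simply divides by $\|x\|_{\mathbb D}$.
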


\begin{proof}
	proof: define $p_{s} : X \rightarrow \mathbb  D^{+}$ as $$ p_{s}(x) = \| T_{s}(x)\|_{\mathbb D}$$ Since $T_{s}$ is continuous $\mathbb{B}\mathbb{C}$-linear map, there exists $\alpha \in D^{+}$ such that, $$ \| T_{s}(x)\|_{\mathbb D} \leq \alpha \|x\|_\mathbb {D}.$$ Thus, $$ p_{s}(x) \leq \alpha \| x\|_{D}$$ let, $P= \lbrace p_{s} : X \rightarrow \mathbb D^{+} : s \in S \rbrace$ is a family of continuous hyperbolic semi norms on X.$$\lbrace p_{s}(x) : s \in S \rbrace = \lbrace \| T_{s}(x)\|_{\mathbb D} : s \in S \rbrace$$ is $\mathbb{D}$-bounded in X. So, by Uniform Bounded Theorem, $$ p_{s}(x) \leq \alpha \| x\|_{\mathbb D}$$ for all $x \in X$ and $s \in S.$ $$ \| T_{s}(x)\|_{\mathbb D} \leq \alpha \| x\|_{\mathbb D}$$ we have, $$ \frac{\| T_{s}(x)\|_{\mathbb D}}{\| x\|_{\mathbb D}} \leq \alpha $$ Hence, $$ \| T_{s}(x)\|_{\mathbb D} \leq \alpha $$ for all $s \in S.$ Hence proved.
\end{proof}\noindent
Now we shall prove the open mapping theorem. We shall require a lemma before we prove the open mapping theorem.

\begin{lemma}
  let	X and Y be hyperbolic normed spaces. $T: X \rightarrow Y$ is $\mathbb{B}\mathbb{C}$-linear map. Then, T is open map iff there exists $\delta \in D^{+}$ such that for all $y \in Y$ , there exists $x \in X$ such that $T(x)=y$ and $\| x\|_{\mathbb D} \leq \delta \|y\|_{\mathbb D}.$
	
\end{lemma}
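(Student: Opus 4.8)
The statement is the hyperbolic analogue of the classical characterization of open linear maps, so the plan is to prove the two implications separately, mirroring the real/complex argument while keeping track of the fact that the norms take values in $\mathbb{D}^{+}$ and that $\leq$ is only a partial order. Throughout I would take $\delta$ to be \emph{strictly} positive, so that by the Remark in the introduction $\delta^{-1}=\frac{1}{\delta_{1}}e_{1}+\frac{1}{\delta_{2}}e_{2}$ exists in $\mathbb{D}^{+}$ and multiplication by $\delta^{-1}$ is order preserving; this is what makes the scaling manipulations legitimate. I would also use at the outset the decomposition $X=e_{1}X\oplus e_{2}X$ and $Y=e_{1}Y\oplus e_{2}Y$ induced by the idempotents, together with the fact that $\mathbb{B}\mathbb{C}$-linearity of $T$ forces $T(e_{j}X)\subseteq e_{j}Y$ for $j=1,2$, since $T(e_{j}x)=e_{j}T(x)$.

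For the implication $(\Leftarrow)$, assume the stated bound holds and let $U\subseteq X$ be open. Pick $y_{0}\in T(U)$, say $y_{0}=T(x_{0})$ with $x_{0}\in U$, and choose $r>0$ with $B[x_{0},r]\subseteq U$. I claim $B(y_{0},\delta^{-1}r)\subseteq T(U)$: given $y$ with $\|y-y_{0}\|_{\mathbb{D}}<\delta^{-1}r$, the hypothesis applied to $y-y_{0}$ produces $x'\in X$ with $T(x')=y-y_{0}$ and $\|x'\|_{\mathbb{D}}\leq\delta\|y-y_{0}\|_{\mathbb{D}}<r$. Then $x_{0}+x'\in B[x_{0},r]\subseteq U$ and $T(x_{0}+x')=y$ by linearity, so $y\in T(U)$. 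Hence every point of $T(U)$ is interior and $T$ is open.

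For the implication $(\Rightarrow)$, assume $T$ is open. Since $T(0)=0$ and the open unit ball $B(0,1)=\lbrace x:\|x\|_{\mathbb{D}}<1\rbrace$ is open, $T(B(0,1))$ is an open set containing $0$, so there is a strictly positive $\rho\in\mathbb{D}^{+}$ with $B(0,\rho)\subseteq T(B(0,1))$. Put $\delta=2\rho^{-1}\in\mathbb{D}^{+}$. Given $y\in Y$, I would choose a strictly positive hyperbolic scalar $\lambda$ that scales $y$ into $B(0,\rho)$, use the inclusion to get $x'$ with $\|x'\|_{\mathbb{D}}<1$ and $T(x')=\lambda y$, and then set $x=\lambda^{-1}x'$, so that $T(x)=y$ and $\|x\|_{\mathbb{D}}=\lambda^{-1}\|x'\|_{\mathbb{D}}$ is controlled by $\delta\|y\|_{\mathbb{D}}$.

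The main obstacle is exactly this last scaling step, and it is where the hyperbolic setting genuinely differs from the classical one: $\|y\|_{\mathbb{D}}$ need not be strictly positive, since a nonzero $y$ may lie entirely in $e_{1}Y$ or $e_{2}Y$, so one cannot simply normalise by $\|y\|_{\mathbb{D}}^{-1}$. I would handle this by working in the two idempotent components separately, choosing $\lambda=\lambda_{1}e_{1}+\lambda_{2}e_{2}$ componentwise and, crucially, replacing the preimage $x'$ by its relevant idempotent projection $e_{1}x'$ or $e_{2}x'$. The point is that if $\lambda y\in e_{1}Y$ then $T(e_{1}x')=e_{1}T(x')=\lambda y$ while $\|e_{1}x'\|_{\mathbb{D}}\leq\|x'\|_{\mathbb{D}}$, so the component of $\|x\|_{\mathbb{D}}$ in the vanishing slot of $\|y\|_{\mathbb{D}}$ is forced to $0$ and the bound $\|x\|_{\mathbb{D}}\leq\delta\|y\|_{\mathbb{D}}$ survives in both components. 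Once this projection fix is in place the estimate reduces to two copies of the familiar real computation.
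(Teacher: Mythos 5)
Your proof is correct and follows the same route as the paper's: the reverse implication is the standard translate-and-scale argument (open set $E$, point $x_0$, ball $B(x_0,r)\subseteq E$, pull back $y-T(x_0)$ and add $x_0$), and the forward implication applies openness to the unit ball, extracts a $\mathbb{D}$-ball $B(0,\rho)\subseteq T(B(0,1))$, and rescales a preimage of a suitable multiple of $y$. The one genuine difference is the point you flag yourself: the paper simply forms $\delta y/\|y\|_{\mathbb D}$, which is undefined whenever $y$ is a nonzero element of $e_1Y$ or $e_2Y$, because $\|y\|_{\mathbb D}$ is then a zero divisor in $\mathbb D$ and has no inverse. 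Your fix --- choosing the scaling factor $\lambda=\lambda_1e_1+\lambda_2e_2$ componentwise and replacing the preimage $x'$ by its idempotent projection $e_jx'$, using $T(e_jx')=e_jT(x')$ and $\|e_jx'\|_{\mathbb D}=|e_j|_k\|x'\|_{\mathbb D}=e_j\|x'\|_{\mathbb D}$ so that the component of $\|x\|_{\mathbb D}$ sitting over the vanishing component of $\|y\|_{\mathbb D}$ is itself $0$ --- is sound and closes a gap that the paper's own argument leaves open. In short, your proposal is not merely equivalent to the published proof; at this one step it is strictly more careful, and the rest coincides with the paper.
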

\begin{proof}
	 Suppose T is an open map. we want to prove that for $y\in Y$ , there exists $x \in X$ such that $T(x)=y$ and $\| x\|_{\mathbb D} \leq \delta \|y\|_{\mathbb D}.$\\
  Consider, $U= \lbrace x \in X : \| x\|_{\mathbb D}<1 \rbrace $ is a open ball hence a open set. so, $T(U)$ is open set in Y. As $0 \in U$, $T(0)=0\in T(U)$ is open. there exists $\delta \in \mathbb D^{+} $, $$B[0,\delta]\subset T(U).$$ let $$ y\in Y, y \neq 0 , \frac{\delta y}{\| y\|_{\mathbb D}} \in B[0,\delta] \subset T(U).$$
  there exists $\widetilde{x} \in U$, $$\frac{\delta y}{\| y\|_{\mathbb D}} = T\widetilde{x}$$ $$ y=\frac{\| y\|_{\mathbb D}}{\delta} T\widetilde{x} = T(\frac{\| y\|_{\mathbb D} \widetilde{x}}){\delta}$$ $$ \| x\|_\mathbb {D} = \| \frac{\| y\|_{\mathbb D} \widetilde{x}}{\delta}\|_{\mathbb D} \leq \frac{\| y\|_{\mathbb D}}{\delta}.$$ As $\| \widetilde{x} \|_{\mathbb D} <1$, we get, $$ \| x\|_{\mathbb D} \leq \frac{\| y\|_{\mathbb D}}{\delta}.$$ As $ \gamma=\frac{1}{\delta} \in \mathbb D^{+}$ we get, $$ \| x\|_{\mathbb D} \leq \gamma \| y\|_{\mathbb D}.$$ 
  Conversely, we have, there exists $\delta \in D^{+}$ such that for every $y \in Y$, there exists $x \in X$ such that $T(x)=y$ and $\| x\|_{D} \leq \delta \| y\|_{D}.$ we want to show that $ T$ is open.\\
  Let $E$  be open in $X$. we want to prove that $T(E)$ is open $ Y$. let $y_{0} \in T(E)$ so, $y_{0} = T(x_{0})$ for some $ x_{0} \in E.$\\
  Since E is open, there exists $\delta \in D^{+}$ such that $B(x_{0},\delta) \subset E.$\\ Claim:$$ B(T(x_{0}),\frac{\delta}{\gamma}) \subset T(E).$$
  let $y\in B(T(x_{0},\frac{\delta}{\gamma})$
   $$ \| y-T(x_{0})\|_{\mathbb D} < \frac{\delta}{\gamma}$$ so, $$ y-T(x_{0}) \in Y$$  there exists $x \in X , T(x) = y-T(x_{0})$ and $\| x\|_{D} \leq \gamma \| y-T(x_{0})\|_{\mathbb D} < \delta$ $$ y=T(x)+T(x_{0})=T(x+x_{0})$$ $$ \| x\|_{D} < \delta , \| x+x_{0}-x{0}\|_{\mathbb D} < \delta$$ so, $$ x+x_{0} \in B(x_{0},\delta)$$ As $x+x_{0} \in E,$  $T(x+x_{0}) \in T(E).$ This implies $y\in T(E).$ Hence, $T(E)$ is open. 
\end{proof}

\begin{theorem}
If $X$ and $Y$ be F-$\mathbb{B}\mathbb{C}$ modules. $T: X \rightarrow Y$ is surjective and closed then $T$ is $\mathbb{D}$-bounded and $T$ is open map.    
\end{theorem}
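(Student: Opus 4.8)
The plan is to separate the statement into its two conclusions --- that $T$ is $\mathbb{D}$-bounded and that $T$ is open --- and to prove each by reusing machinery already established in the paper. The $\mathbb{D}$-boundedness requires essentially no new work: since $T$ is a closed $\mathbb{BC}$-linear map between $F$-$\mathbb{BC}$ modules, the Closed Graph Theorem applies verbatim and yields that $T$ is continuous, and the continuity characterization (Lemma 3.1) then produces an $\alpha \in \mathbb{D}^{+}$ with $\|Tx\|_{\mathbb{D},Y} \leq \alpha\|x\|_{\mathbb{D},X}$ for all $x$, which is exactly the assertion that $T$ is $\mathbb{D}$-bounded. So the first half collapses to a single appeal to an already-proved theorem.

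The real content is openness. Here I would invoke the open-map characterization Lemma proved immediately above: it suffices to exhibit one $\delta \in \mathbb{D}^{+}$ such that every $y \in Y$ admits a preimage $x$ with $\|x\|_{\mathbb{D}} \leq \delta\|y\|_{\mathbb{D}}$. To manufacture such a $\delta$, I would use surjectivity to define the candidate hyperbolic seminorm on $Y$,
$$ p_{\mathbb{D}}(y) = \inf\{\|x\|_{\mathbb{D}} : x \in X,\ Tx = y\}, $$
interpreted componentwise in the $e_1,e_2$-decomposition so that the infimum is a genuine element of $\mathbb{D}^{+}$, and then run Zabreiko's Lemma on it. Homogeneity $p_{\mathbb{D}}(\mu y) = |\mu|_k\, p_{\mathbb{D}}(y)$ and finite subadditivity $p_{\mathbb{D}}(y_1 + y_2) \leq p_{\mathbb{D}}(y_1) + p_{\mathbb{D}}(y_2)$ follow routinely by pushing the infimum through the identities $T(\mu x) = \mu Tx$ and $T(x_1 + x_2) = Tx_1 + Tx_2$ together with the norm axioms.

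The key step, and the main obstacle, is countable subadditivity of $p_{\mathbb{D}}$. Given a convergent series $y = \sum_n y_n$ in $Y$ with $\sum_n p_{\mathbb{D}}(y_n)$ convergent, I would fix a strictly positive hyperbolic $\epsilon$ and, for each $n$, use the definition of the infimum to select $x_n$ with $Tx_n = y_n$ and $\|x_n\|_{\mathbb{D}} \leq p_{\mathbb{D}}(y_n) + \epsilon/2^n$. Then $\sum_n \|x_n\|_{\mathbb{D}}$ converges, so $\sum_n x_n$ is absolutely summable; because $X$ is an $F$-$\mathbb{BC}$ module, absolute summability gives summability, hence $x = \sum_n x_n$ exists. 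Applying $T$ to the partial sums yields $T\!\left(\sum_{k\leq n} x_k\right) = \sum_{k \leq n} y_k \to y$ while $\sum_{k\leq n} x_k \to x$, so the closedness of $T$ forces $Tx = y$, whence $p_{\mathbb{D}}(y) \leq \|x\|_{\mathbb{D}} \leq \sum_n p_{\mathbb{D}}(y_n) + \epsilon$; letting $\epsilon$ shrink gives countable subadditivity. The delicate point throughout is the interaction of the infimum with the partial order on $\mathbb{D}^{+}$: since $\leq$ is only a partial order with zero divisors present, the approximate minimizers must be chosen componentwise to guarantee that each hyperbolic inequality genuinely holds in $\mathbb{D}^{+}$, and this is where the argument needs the most care.

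With countable subadditivity secured, Zabreiko's Lemma makes $p_{\mathbb{D}}$ continuous on the $F$-$\mathbb{BC}$ module $Y$, and Lemma 3.1 supplies $\delta_0 \in \mathbb{D}^{+}$ with $p_{\mathbb{D}}(y) \leq \delta_0\|y\|_{\mathbb{D}}$ for all $y$. Finally, for each $y$ I would approximate the infimum once more to choose a preimage $x$ with $\|x\|_{\mathbb{D}} \leq p_{\mathbb{D}}(y) + \|y\|_{\mathbb{D}} \leq (\delta_0 + 1)\|y\|_{\mathbb{D}}$, so that $\delta := \delta_0 + 1 \in \mathbb{D}^{+}$ meets the hypothesis of the preceding open-map characterization Lemma. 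That Lemma then delivers openness of $T$, which together with the first paragraph completes the proof.
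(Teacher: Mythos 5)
Your proposal is correct and follows essentially the same route as the paper: the Closed Graph Theorem for $\mathbb{D}$-boundedness, the infimum seminorm $q(y)=\inf\{\|x\|_{\mathbb{D}} : x\in X,\ Tx=y\}$ on $Y$, countable subadditivity via approximate minimizers $\|x_k\|_{\mathbb{D}}\leq q(y_k)+\epsilon/2^k$ combined with absolute summability in $X$ and closedness of $T$, then Zabreiko's Lemma and the open-map characterization lemma. If anything, your final step --- explicitly extracting a preimage with $\|x\|_{\mathbb{D}}\leq(\delta_0+1)\|y\|_{\mathbb{D}}$ rather than passing directly from the infimum bound to a preimage bound --- is slightly more careful than the paper's.
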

\begin{proof}
   Closed Graph theorem implies that $ T$ is $\mathbb{D}$-bounded. 
    Consider $q : Y \rightarrow \mathbb  D^{+}$ given by $$ q(y)= \inf \lbrace \| x\|_{\mathbb D} , x \in X , Tx=y \rbrace.$$
    We want to show that $q(y)$ is countably sub additive hyperbolic semi norm. Clearly, $q(y)$ is well defined. Also proving the triangle inequality is trivial. Now if $\alpha =0$, $q(0 y)=0.$ So assume $\alpha \in \mathbb D^{+}$, $$ q(\alpha y) = \inf \lbrace \|  x\|_{D} x\in X , Tx=\alpha y \rbrace$$ $$ = \inf \lbrace \| x\|_{\mathbb D} , x\in X, T(\frac{x}{\alpha})=y \rbrace$$ let $u=\frac{x}{\alpha}$ , so $ x= \alpha u$ we get, $$ q(\alpha y) =\inf \lbrace \| \alpha u\|_{\mathbb D} , u \in X , T(u)=y \rbrace$$ $$ =\inf \lbrace | \alpha|_{k} \| u\|_{\mathbb D} , u\in X , T(u)=y \rbrace $$ $$ =| \alpha|_{k} \lbrace \| u\|_{\mathbb D} , u\in X , T(u)=y \rbrace$$ $$ q(\alpha y) = | \alpha |_{k} q(y).$$ Hence, $q$ is a hyperbolic semi norm.\\ Now we want to show $q$ is countably sub additive. Let $\displaystyle \sum_{k=1}^{\infty} y_{k}$ be a convergent  with $\displaystyle \sum_{k=1}^{\infty} q(y_{k})$ also convergent.$$ q(y_{k}) = \inf \lbrace \| x\|_{D} , x\in X , T(x) = y_{k} \rbrace.$$ Let $ \epsilon \in \mathbb D^{+}$ be strictly positive. For each $k$ , $$ \epsilon = \sum_{k=1}^{\infty} \frac{\epsilon}{2^{k}} = \sum_{k=1}^{\infty} \epsilon_{k}.$$ As $T$ is surjective there exists $x_{k} \in X$ such that $ T(x_{k}) = y_{k}$ and $$ \| x_{k}\|_{\mathbb D} \leq  q(y_{k}) + \epsilon_{k}.$$ As a consequence $$  \sum_{k=1}^{\infty} \| x_{k}\|_{\mathbb D} \leq \sum_{k=1}^{\infty} q(y_{k}) + \sum_{k=1}^{\infty} \epsilon_{k}.$$ Hence comparison test we get, $\sum_{k=1}^{\infty} \| x_{k}\|_{\mathbb D}$ is convergent.
    As absolute summable implies summable since X is F-$\mathbb{B}\mathbb{C}$ module. we get, $$ \sum_{k=1}^{\infty} x_{k}=x$$
    let $$ S_{n}=\sum_{k=1}^{n} x_{k}$$ $S_{n} \rightarrow x$ as $n \rightarrow \infty$ So, $T(S_{n}) \rightarrow T(x)$ (T is continuous by closed graph theorem.) $$ T(\sum_{k=1}^{n} x_{k}) \rightarrow \sum_{k=1}^{n} y_{k} \rightarrow y$$ we have $y=T(x).$ Hence , $$ q(y) \leq \| x\|_{\mathbb D} = \| \sum_{k=1}^{\infty} x_{k}\|_{D} \leq \sum_{k=1}^{\infty} \| x_{k}\|_{\mathbb D} \leq \sum_{k=1}^{\infty}q(y_k) +\sum_{k=1}^{\infty} \epsilon_{k}.$$ Thus, $$ q(\sum_{k=1}^{\infty} y_{k}) \leq \sum_{k=1}^{\infty} q(y_{k}) + \epsilon.$$ Thus $ q$ is countabley sub additive. \\
    As $q$ is countably sub additive hyperbolic semi norm on Y by Zabreiko's lemma q is continuous. so, there exists $\alpha \in D^{+}$ such that $q(y) \leq \alpha \| y\|_{\mathbb D}.$ Hence for all $y \in Y$ , there exists $x \in X$ such that $T(x)=y$ and $\| x\|_{\mathbb D} < \alpha \| y\|_{\mathbb D}.$ Therefore by the previous lemma we get that $ T$ is open.
\end{proof}

\noindent
Acknowledement : The author Akshay S. Rane would like to thank UGC faculty recharge program, India for their support.


\begin{thebibliography}{999}
\bibitem{ALSS}
Alpay, D., Luna-Elizarraras, M.E., Shapiro, M., Struppa, D.C.: Basics of Functional
Analysis with Bicomplex Scalars and Bicomplex Schur Analysis. Springer
Briefs in Mathematics. Springer, Berlin (2014)

\bibitem{HAR}
H.Saini, A.Sharma, R.kumar: Some Fundamental Theorems of Functional Analysis with Bicomplex and hyperbolic scalars,
Adv. Appl. Clifford Algebras (2020), Springer ,Nature Switzerland 

\bibitem{CSS}
Colombo, F., Sabadini, I., Struppa, D.C.: Bicomplex holomorphic functional
calculus. Math. Nachr. 287(13), 1093–1105 (2013)

\bibitem{KKR}
Kumar, R., Kumar, R., Rochon, D.: The fundamental theorems in the framework
of bicomplex topological modules. arXiv:1109.3424v1 (2011)

\bibitem{HS}
Kumar, R., Saini, H.: Topological bicomplex modules. Adv. Appl. Clifford Algebr.
26(4), 1249–1270 (2016)

\bibitem{EPR}
Luna-Elizarraras, M.E., Perez-Regalado, C.O., Shapiro, M.: On the bicomplex
Gleason–Kahane Zelazko Theorem. Complex Anal. Oper. Theory 10(2), 327–
352 (2016)



\bibitem{Zab}
P. P. ZABREIKO, A theorem for semiadditive functionals”, Funktsional. Anal. i Prilozhen., 3, 1, 1969,86-88.


\end{thebibliography}
\end{document}